\newtheorem{theorem}{Theorem}[section]
\newtheorem{corollary}[theorem]{Corollary}
\newtheorem{lemma}[theorem]{Lemma}
\newtheorem{proposition}[theorem]{Proposition}
\newtheorem{definition}[theorem]{Definition}
\newtheorem{remark}[theorem]{Remark}
\numberwithin{equation}{section}
\begin{document}
\title{Long term spatial homogeneity for a chemotaxis model with local sensing and consumption} 

\author{Philippe Lauren\c{c}ot}
\address{Laboratoire de Math\'ematiques (LAMA) UMR~5127, Universit\'e Savoie Mont Blanc, CNRS\\
F--73000 Chamb\'ery, France}
\email{philippe.laurencot@univ-smb.fr}

\keywords{convergence - Liapunov functional - chemotaxis-consumption model - local sensing}
\subjclass{35B40 - 37L45 - 35K51 - 35Q92}

\date{\today}

\begin{abstract}
Global weak solutions to a chemotaxis model with local sensing and consumption are shown to converge to spatially homogeneous steady states in the large time limit, when the motility is assumed to be positive and $C^1$-smooth on $[0,\infty)$. The result is valid in arbitrary space dimension $n\ge 1$ and extends a previous result which only deals with space dimensions $n\in \{1,2,3\}$. 
\end{abstract}

\maketitle

%
%
\pagestyle{myheadings}
\markboth{\sc{Ph. Lauren\c cot}}{\sc{Long term spatial homogeneity for a chemotaxis model}}

\section{Introduction}\label{sec1}

Let $\Omega$ be a smooth bounded domain of $\mathbb{R}^n$, $n\ge 1$, and consider the initial boundary value problem
\begin{subequations}\label{ks}
	\begin{align}
		\partial_t u & = \Delta (u\gamma(v)) \;\;\text{ in }\;\; (0,\infty)\times \Omega\,, \label{ks1} \\
		\partial_t v & = \Delta v - uv \;\;\text{ in }\;\; (0,\infty)\times \Omega\,, \label{ks2} \\
		\nabla (u\gamma(v))\cdot \mathbf{n} & = \nabla v\cdot \mathbf{n} = 0 \;\;\text{ on }\;\; (0,\infty)\times \partial\Omega\,, \label{ks3} \\
		(u,v)(0) & = (u^{in},v^{in}) \;\;\text{ in }\;\; \Omega\,, \label{ks4} 
	\end{align}
\end{subequations}
which describes the dynamics of a population of bacteria with non-negative density~$u$ and of a signal with non-negative concentration~$v$. On the one hand, according to~\eqref{ks1}, the diffusive motion of the bacteria is not only monitored by the signal through the motility function $\gamma$ but also biased by a chemotactic effect generated by the signal. On the other hand, the signal is consumed by the bacteria, as reflected by the reaction term on the right hand side of~\eqref{ks2}. The latter mechanism is in sharp contrast with classical Keller-Segel chemotaxis models \cite{KeSe1970}, in which the sink term $-uv$ in~\eqref{ks2} is replaced by $u-v$, so that bacteria produce the signal that alters their motion, see the survey articles \cite{BBTW2015, HiPa2009, Hors2003, Pain2019} and the references therein for a more precise account. Therefore, the dynamics of~\eqref{ks} is expected to differ significantly. A first hint in that direction is the following property: if $(u_s,v_s)$ is a stationary solution to~\eqref{ks} with $u_s\not\equiv 0$, then necessarily $v_s\equiv 0$ by~\eqref{ks2}. In that case, it readily follows from~\eqref{ks1} that $\gamma(0)u_s=\mathrm{const.}$, which reduces to $u_s=\mathrm{const.}$ when $\gamma(0)>0$. It is thus expected that the positivity of both $\gamma(0)$ and $\|u^{in}\|_1$ implies that any global non-negative solution $(u,v)$ to~\eqref{ks} satisfies
\begin{equation}
	\lim_{t\to\infty} (u(t),v(t)) = \left( \frac{\|u^{in}\|_1}{|\Omega|} , 0 \right) \label{ltcv}
\end{equation}
in an appropriate topology. That this convergence holds true in $L^\infty(\Omega,\mathbb{R}^2)$ is shown in~\cite[Theorem~1.2]{LiZh2021} when $\gamma\in C^3([0,\infty))$ is positive on $[0,\infty)$ with $\gamma'<0$ on $(0,\infty)$ and $\|v^{in}\|_\infty$ is sufficiently small and in~\cite{LiWi2023b} when $\gamma\in C^3([0,\infty))$ is positive on $[0,\infty)$ and the space dimension~$n$ ranges in $\{1,2,3\}$. The required regularity of~$\gamma$ is subsequently relaxed in~\cite[Theorem~1.2]{LiWi2023a}, where the validity of~\eqref{ltcv} is established under the sole assumption 
\begin{equation}
	\gamma\in C^1([0,\infty))\,, \quad \gamma>0 \;\;\text{ on }\;\; [0,\infty)\,, \label{n3}
\end{equation}
still for $n\in \{1,2,3\}$, though in the weaker topology $H^1(\Omega)'\times L^\infty(\Omega)$. The main purpose of this note is to show that the assumption~\eqref{n3} is sufficient to prove that the convergence~\eqref{ltcv} holds true in arbitrary space dimension in $H^1(\Omega)'\times H^1(\Omega)$, see Theorem~\ref{thm1} below. When $n\in \{1,2,3\}$, we further deduce the convergence of~$v$ to zero in $L^\infty(\Omega)$ from Theorem~\ref{thm1}, the continuous embedding of $H^2(\Omega)$, and the time monotonicity of $\|v\|_\infty$, see Corollary~\ref{cor1} below.

The statement of the main result of this note requires to introduce some notation: first, for $z\in H^1(\Omega)'$, we set $\langle z\rangle := \langle z , 1 \rangle_{(H^1)',H^1}/|\Omega|$ and note that
\begin{equation*}
	\langle z\rangle = \frac{1}{|\Omega|} \int_\Omega z(x)\ \mathrm{d}x \;\;\text{ for }\;\; z\in H^1(\Omega)'\cap L^1(\Omega).
\end{equation*}
Next, for $z\in H^1(\Omega)'$ with $\langle z \rangle = 0$, let $\mathcal{K}[z]\in H^1(\Omega)$ be the unique (variational) solution to 
\begin{subequations}\label{n2}
	\begin{equation}
		-\Delta\mathcal{K}[z] = z \;\;\text{ in }\;\; \Omega\,, \qquad \nabla\mathcal{K}[z]\cdot \mathbf{n} = 0 \;\;\text{ on }\;\; \partial\Omega\,, \label{n2a}
	\end{equation}
satisfying
	\begin{equation}
		\langle\mathcal{K}[z] \rangle = 0\,. \label{n2b}
	\end{equation}
\end{subequations}
Also, for $p\in [1,\infty]$, we denote the positive cone of $L^p(\Omega)$ by $L_+^p(\Omega)$.

\begin{theorem}\label{thm1}
	Assume that $\gamma$ satisfies~\eqref{n3} and consider $u^{in}\in L_+^1(\Omega)\cap H^1(\Omega)'$ and $v^{in} \in L_+^\infty(\Omega)\cap H^1(\Omega)$ with $M:=\langle u^{in} \rangle>0$. If $(u,v)$ is a global weak solution to~\eqref{ks} in the sense of Definition~\ref{def2} below, then
	\begin{align}
		& \lim_{t\to \infty} \|\nabla P(t)\|_2 = \lim_{t\to\infty} \|v(t)\|_{H^1} = 0\,, \label{cvp} \\
		& \lim_{t\to\infty} \int_t^{t+1} \|u(s)-M\|_2^2\ \mathrm{d}s = \lim_{t\to\infty} \int_t^{t+1} \|v(s)\|_{H^2}^2\ \mathrm{d}s = 0\,, \label{cvi}
	\end{align}
	where $P(t) := \mathcal{K}[u(t)-M]$ for $t\ge 0$. 
\end{theorem}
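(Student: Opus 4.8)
The plan is to exploit the consumption structure of the system, which should provide a Liapunov functional that forces convergence to the spatially homogeneous state. The natural candidate here is an energy that combines a convex functional of $u$ (weighted by the motility $\gamma$) with a control on $v$. Since the signal equation $\partial_t v = \Delta v - uv$ has a pure sink term and $u\ge 0$, one expects $\|v(t)\|_\infty$ and various norms of $v$ to be non-increasing or at least to decay; this is the first ingredient I would establish or invoke from the weak-solution framework. I would first record the basic conservation and dissipation identities: mass conservation $\langle u(t)\rangle = M$ for all $t\ge 0$ (which follows from integrating~\eqref{ks1} and the no-flux boundary condition~\eqref{ks3}), and a decay estimate for $v$ obtained by testing~\eqref{ks2} against suitable functions, yielding that $\int_0^\infty \|\nabla v\|_2^2\,\mathrm{d}t$ and $\int_0^\infty \int_\Omega u v^2\,\mathrm{d}x\,\mathrm{d}t$ are finite.

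\medskip

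Next I would seek the key Liapunov functional. Given the form of~\eqref{ks1} written as $\partial_t u = \Delta(u\gamma(v))$, a promising choice is a functional of the form $\mathcal{E}(t) = \int_\Omega \Phi(u)\,\mathrm{d}x + \tfrac{\kappa}{2}\int_\Omega |\nabla \mathcal{K}[u-M]|^2\,\mathrm{d}x$ or, more in the spirit of the problem, to use the $H^1(\Omega)'$-norm of $u-M$ directly, i.e.\ $\tfrac12\|\nabla P(t)\|_2^2$ where $P = \mathcal{K}[u-M]$. Differentiating $\tfrac12\|\nabla P(t)\|_2^2$ in time and using $-\Delta P = u - M$ together with~\eqref{ks1}, one formally computes
\begin{equation*}
	\frac{\mathrm{d}}{\mathrm{d}t}\,\frac12\|\nabla P\|_2^2 = \langle \partial_t u, P\rangle = -\int_\Omega (u - M)\,u\gamma(v)\,\mathrm{d}x\,,
\end{equation*}
after integrating by parts and using $-\Delta(u\gamma(v))$ paired with $P$. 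Rewriting the right-hand side by adding and subtracting $M\gamma(v)$ and splitting off the $v$-dependence, the dominant contribution should be $-\int_\Omega \gamma(0)(u-M)^2\,\mathrm{d}x$ plus remainder terms controlled by quantities involving $v$, which decay by the first step. This is precisely the mechanism that should pin $u$ to its spatial average $M$. I would therefore aim to show
\begin{equation*}
	\frac{\mathrm{d}}{\mathrm{d}t}\,\frac12\|\nabla P\|_2^2 + \gamma(0)\|u - M\|_2^2 \le R(t)\,,
\end{equation*}
with $\int_t^{t+1} R(s)\,\mathrm{d}s \to 0$ as $t\to\infty$, and then integrate over $[t,t+1]$ to extract the time-averaged convergence~\eqref{cvi} for $u$.

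\medskip

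Having controlled $u-M$ in the averaged $L^2$-sense, I would return to the $v$-equation to upgrade the decay of $v$ to the strong $H^1$ convergence and the averaged $H^2$ bound in~\eqref{cvi}. Testing~\eqref{ks2} against $-\Delta v$ gives an identity of the form
\begin{equation*}
	\frac12\frac{\mathrm{d}}{\mathrm{d}t}\|\nabla v\|_2^2 + \|\Delta v\|_2^2 = \int_\Omega uv\,\Delta v\,\mathrm{d}x\,,
\end{equation*}
and the right-hand side must be absorbed using the already-established smallness of $v$ (for instance its $L^\infty$ decay) together with the mass bound on $u$; the nonnegativity of both $u$ and $v$ and the maximum-principle monotonicity of $\|v\|_\infty$ are what make this absorption possible in arbitrary dimension. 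I expect the main obstacle to lie precisely here and in the preceding step: because we work only with \emph{weak} solutions under the minimal regularity~\eqref{n3}, all of the formal differentiations above (the time derivative of $\tfrac12\|\nabla P\|_2^2$, the testing against $-\Delta v$, and the integrations by parts involving $u\gamma(v)$) must be justified rigorously within the weak framework of Definition~\ref{def2}. I would handle this by an approximation or regularization argument, deriving the energy inequalities for smooth approximants and passing to the limit, and by invoking lower semicontinuity to preserve the dissipation inequalities. The convergence~\eqref{cvp} for $\|\nabla P(t)\|_2$ would then follow from the monotone-plus-integrable structure of the energy identity, and the $H^1$-convergence of $v$ from combining the $\nabla v$ decay with the $L^2$-decay of $v$ itself.
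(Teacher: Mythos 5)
Your overall architecture matches the paper's: mass conservation and the $L^\infty$/$L^2$ dissipation for $v$, the duality identity $\frac{\mathrm{d}}{\mathrm{d}t}\frac12\|\nabla P\|_2^2=-\int_\Omega\gamma(v)u(u-M)\,\mathrm{d}x$, and the test against $-\Delta v$ are exactly Lemmas~\ref{lem3}--\ref{lem5} and~\ref{lem7}. But there is a genuine gap at the one step that actually carries the proof, namely the absorption of the cross term $M\int_\Omega\gamma(v)(u-M)\,\mathrm{d}x$. Your plan is to write $\gamma(v)=\gamma(0)+(\gamma(v)-\gamma(0))$ and treat the $v$-dependent remainder as a forcing $R(t)$ that ``decays by the first step''. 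This fails twice. First, the quadratic piece $-\int_\Omega(\gamma(v)-\gamma(0))(u-M)^2\,\mathrm{d}x$ is of size $\|\gamma'\|_{L^\infty(0,V)}\|v\|_\infty\|u-M\|_2^2$, which competes with the good term $\gamma(0)\|u-M\|_2^2$ and can only be absorbed if $\|v^{in}\|_\infty$ is small --- exactly the smallness regime of \cite{LiZh2021} that the theorem is meant to avoid; the correct move is not to perturb around $\gamma(0)$ but to use the uniform lower bound $\gamma_*=\min_{[0,V]}\gamma>0$, available since $\|v\|_\infty\le V$ and $\gamma$ is continuous and positive. Second, and more importantly, the linear cross term leaves a remainder of the form $M\|\gamma'\|_{L^\infty(0,V)}\|v\|_2\|u-M\|_2$, and neither $\|v\|_2$ nor $\|v\|_\infty$ is known to decay at this stage: their decay is a \emph{consequence} of the theorem, obtained afterwards precisely from the integrability of $\|u-M\|_2^2$, so your argument is circular. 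The paper's device, which is the actual crux, is to exploit $\int_\Omega(u-M)\,\mathrm{d}x=0$ to replace $\gamma(v)$ by $\gamma(v)-\gamma(\langle v\rangle)$ in the cross term, so that the mean value theorem and the Poincar\'e--Wirtinger inequality give the bound $Mc_1\|\gamma'\|_{L^\infty(0,V)}\|\nabla v\|_2\|u-M\|_2$; after Young's inequality the leftover multiple of $\|\nabla v\|_2^2$ is controlled by the dissipation of $\|v\|_2^2$, whence the Liapunov functional $\|\nabla P\|_2^2+c_2\|v\|_2^2$ of Proposition~\ref{prop6}. Without this mean-zero trick the scheme does not close in arbitrary dimension.

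Two smaller points. In the $-\Delta v$ estimate you propose to absorb $\int_\Omega uv\Delta v\,\mathrm{d}x$ using ``the $L^\infty$ decay of $v$'', which is not available (and for $n\ge4$ is never asserted); the paper only needs the boundedness $\|v\|_\infty\le V$ after splitting $u=(u-M)+M$. And your final claim that $\|v\|_{H^1}\to0$ follows from ``the $L^2$-decay of $v$ itself'' presupposes $\|v(t)\|_2\to0$, which requires a separate argument: Lemma~\ref{lem9} proves $\|v\|_1\to0$ by the same mean-zero trick combined with an exponential-kernel estimate, and then one interpolates with $\|v\|_\infty\le V$.
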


As already mentioned, Theorem~\ref{thm1} supplements previous results in the literature showing the long term convergence of $(u-M,v)$ to zero, either in low space dimension $n\in \{1,2,3\}$, see~\cite[Theorem~1.2]{LiWi2023a}, or when $\|v^{in}\|_\infty$ is sufficiently small, see~\cite[Theorem~1.2]{LiZh2021}. As in~\cite{LiWi2023a}, the proof of Theorem~\ref{thm1} relies on the so-called duality estimate derived from~\eqref{ks1} (Lemma~\ref{lem4}) and the dissipativity properties of~\eqref{ks2} (Lemma~\ref{lem5}). The building block of the proof is to show that $\|\nabla P\|_2^2 + a \|v\|_2^2$  is a Liapunov functional for~\eqref{ks} for a suitable choice of~$a>0$. This step is the main difference with the approach developed in~\cite{LiWi2023a} where a functional of the form $\|\nabla P\|_2^2 + b \|\nabla v\|_2^2$ with $b>0$ is used.

\begin{remark}
	When $\gamma(0)=0$, Theorem~\ref{thm1} is no longer true and convergence of $u(t)$ as $t\to\infty$ to a non-constant limit may take place, see \cite[Theorem~1.5]{Wink2023b}.
\end{remark}

We do not address here the issue of the existence of global solutions to~\eqref{ks} and refer to \cite{LiWi2023b, LiZh2021, Wink2023b} for the existence of global bounded classical solutions and to \cite{LiWi2023a, LiWi2023b, LiZh2021, Wink2022a, Wink2023a} for that of global weak solutions under various assumptions on $\gamma$ (with either $\gamma(0)=0$ or $\gamma(0)>0$) and the space dimension~$n$. In particular, given a global weak solution $(u,v)$ to~\eqref{ks} constructed in~\cite{LiWi2023a, LiWi2023b} and $t_0>0$, $(t,x)\mapsto (u,v)(t+t_0,x)$ is a weak solution to~\eqref{ks} in the sense of Definition~\ref{def2}, so that the convergence stated in Theorem~\ref{thm1} applies to these solutions. 

We next combine~\eqref{cvi} and the time monotonicity of the $L^\infty$-norm of $v$ to supplement the convergence~\eqref{cvp} of~$v$ in $H^1(\Omega)$ with convergence to zero of $v$ in $L^\infty(\Omega)$ when the space dimension $n$ ranges in $\{1,2,3\}$, thereby recovering the outcome of \cite[Theorem~1.2]{LiWi2023a}.

\begin{corollary}\label{cor1}
	Assume that $n\in\{1,2,3\}$. Under the assumptions of Theorem~\ref{thm1}, one has also
	\begin{equation*}
		\lim_{t\to \infty} \|v(t)\|_{\infty} = 0\,.
	\end{equation*}
\end{corollary}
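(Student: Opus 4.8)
The plan is to derive the uniform convergence of $v$ to zero from the integral convergence $\int_t^{t+1}\|v(s)\|_{H^2}^2\,\mathrm{d}s\to 0$ established in Theorem~\ref{thm1}, together with the monotonicity of $t\mapsto \|v(t)\|_\infty$. The restriction to $n\in\{1,2,3\}$ is precisely what makes the Sobolev embedding $H^2(\Omega)\hookrightarrow L^\infty(\Omega)$ available, since $H^2(\Omega)\subset C(\overline{\Omega})$ holds exactly when $n<4$. The monotonicity of the sup-norm is a standard consequence of the structure of~\eqref{ks2}: because $v$ solves a linear parabolic equation $\partial_t v = \Delta v - uv$ with $u\ge 0$ and homogeneous Neumann boundary conditions, the comparison principle gives $0\le v(t_2)\le v(t_1)$ pointwise, hence $\|v(t)\|_\infty$ is nonincreasing in~$t$.

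First I would fix an arbitrary $\varepsilon>0$. From~\eqref{cvi}, choose $T_\varepsilon>0$ such that
\begin{equation*}
	\int_t^{t+1} \|v(s)\|_{H^2}^2\ \mathrm{d}s \le \varepsilon^2 \;\;\text{ for all }\;\; t\ge T_\varepsilon\,.
\end{equation*}
For each such $t$, the mean-value argument for integrals yields some $s_t\in[t,t+1]$ with $\|v(s_t)\|_{H^2}^2\le \varepsilon^2$, so that $\|v(s_t)\|_{H^2}\le\varepsilon$. Invoking the continuous embedding $H^2(\Omega)\hookrightarrow L^\infty(\Omega)$, there is a constant $C_{\mathrm{emb}}$, depending only on $\Omega$ and~$n$, with $\|v(s_t)\|_\infty \le C_{\mathrm{emb}}\|v(s_t)\|_{H^2}\le C_{\mathrm{emb}}\varepsilon$.

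Next I would exploit the monotonicity of $\|v\|_\infty$. Since $s_t\le t+1$ and the sup-norm is nonincreasing, one has $\|v(r)\|_\infty \le \|v(s_t)\|_\infty \le C_{\mathrm{emb}}\varepsilon$ for every $r\ge t+1$. As $\varepsilon>0$ was arbitrary, letting $t\to\infty$ gives $\limsup_{r\to\infty}\|v(r)\|_\infty \le C_{\mathrm{emb}}\varepsilon$ for all $\varepsilon$, whence $\lim_{r\to\infty}\|v(r)\|_\infty=0$, which is the claim.

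The only genuinely delicate point is the monotonicity of $\|v(t)\|_\infty$, which must be justified at the level of the weak solutions considered in Definition~\ref{def2} rather than for smooth solutions; this is where I expect the main obstacle to lie. The natural route is to test~\eqref{ks2} against suitable truncations, or to observe that $w:=v(t_1)-v(\cdot+t_1)$ is, formally, a supersolution of the linear equation $\partial_t w = \Delta w - u w$ with nonnegative initial data, and then appeal to a weak comparison principle; one has to check that the regularity afforded by the notion of weak solution is enough to run this argument, or alternatively to recall that this time monotonicity is already part of the construction of the solutions in~\cite{LiWi2023a, LiWi2023b}. Once that monotonicity is in hand, the remaining steps are entirely elementary and rely only on the embedding $H^2(\Omega)\hookrightarrow L^\infty(\Omega)$ valid for $n\le 3$.
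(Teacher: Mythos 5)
Your proposal is correct and takes essentially the same route as the paper: both rest on the integral decay~\eqref{cvi}, the embedding $H^2(\Omega)\hookrightarrow L^\infty(\Omega)$ valid for $n\in\{1,2,3\}$, and the time monotonicity of $\|v(t)\|_\infty$ deduced from the comparison principle applied to~\eqref{ks2}--\eqref{ks3}. The only cosmetic difference is that the paper integrates the monotonicity inequality $\|v(t+1)\|_\infty\le\|v(s)\|_\infty$ over $s\in[t,t+1]$ to bound $\|v(t+1)\|_\infty^2$ directly by $\int_t^{t+1}\|v(s)\|_\infty^2\,\mathrm{d}s$, whereas you select a good time $s_t$ by a mean-value argument and then propagate forward.
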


\section{Proofs}\label{sec2}

We begin with the definition of a global weak solution to~\eqref{ks} and introduce the Hilbert space
\begin{equation*}
	H_N^2(\Omega) := \{ z\in H^2(\Omega)\ :\ \nabla z\cdot\mathbf{n} = 0 \;\;\text{ on }\;\; \partial\Omega \}\,, 
\end{equation*}
which is actually the domain of the Laplace operator in $L^2(\Omega)$ supplemented with homogeneous Neumann boundary conditions.

\begin{definition}\label{def2}
	Consider $u^{in}\in L_+^1(\Omega)\cap H^1(\Omega)'$ and $v^{in} \in L_+^\infty(\Omega)\cap H^1(\Omega)$. A global weak solution to~\eqref{ks} is a couple of non-negative functions
	\begin{equation*}
		(u,v)\in C_w([0,\infty),H^1(\Omega)') \times C([0,\infty),L^2(\Omega))
	\end{equation*}
	satisfying, for any $t>0$, 
	\begin{align*}
		u & \in L^2((0,t)\times \Omega)\,, \\
		v & \in L^\infty((0,t)\times\Omega) \cap W^{1,2}((0,t),L^2(\Omega)) \cap L^2((0,t),H_N^2(\Omega))\,,
	\end{align*}
	along with
	\begin{subequations}\label{d1}
		\begin{equation}
			\int_\Omega u(t) \vartheta(t)\ \mathrm{d}x - \int_\Omega u^{in} \vartheta(0)\ \mathrm{d}x = \int_0^t \int_\Omega \big[ u \gamma(v) \Delta\vartheta + u \partial_t\vartheta \big]\ \mathrm{d}x\mathrm{d}s \label{d1a}
		\end{equation}
	for all $\vartheta\in W^{1,2}((0,t),L^2(\Omega)) \cap L^2((0,t),H_N^2(\Omega))$ and
	\begin{equation}
		\begin{split}
		\partial_t v - \Delta v + uv & = 0 \;\;\text{ a.e. in }\;\; (0,t)\times\Omega\,, \\
		\nabla v\cdot \mathbf{n} & = 0 \;\;\text{ a.e. on }\;\; (0,t)\times\partial\Omega\,.
		\end{split} \label{d1b}
	\end{equation}
	\end{subequations}
\end{definition}

We recall that, given a Banach space $X$ and $T\in (0,\infty]$, $C_w([0,T),X)$ denotes the space of weakly continuous functions from $[0,T)$ to $X$.

We next derive several estimates on $u$ and $v$ which are already well-known, see \cite{LiWi2023a}. From now on, $(c_i)_{i\ge 1}$ denote positive constants depending only on $\Omega$, $\gamma$ in~\eqref{n3}, $u^{in}$, and $v^{in}$.

\begin{lemma}\label{lem3}
	For $t\ge 0$, 
	\begin{equation}
		\langle u(t) \rangle = M = \langle u^{in} \rangle \;\;\text{ and }\;\; \|v(t)\|_\infty \le V := \|v^{in}\|_\infty\,. \label{b1}
	\end{equation}
\end{lemma}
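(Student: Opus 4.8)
The plan is to establish the two identities separately, each following from testing the corresponding equation against a well-chosen function.

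For the conservation of the mean, I would use the weak formulation~\eqref{d1a} with the constant test function $\vartheta\equiv 1$, which belongs to $W^{1,2}((0,t),L^2(\Omega))\cap L^2((0,t),H_N^2(\Omega))$ since $\partial_t\vartheta=\Delta\vartheta=0$ and $\nabla\vartheta\cdot\mathbf{n}=0$ on $\partial\Omega$. With this choice both terms on the right-hand side of~\eqref{d1a} vanish, so that, interpreting the integrals as duality pairings, $\langle u(t),1\rangle_{(H^1)',H^1}=\langle u^{in},1\rangle_{(H^1)',H^1}$ for all $t\ge 0$. Dividing by $|\Omega|$ and recalling the definition of $\langle\cdot\rangle$ gives $\langle u(t)\rangle=\langle u^{in}\rangle=M$.

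For the $L^\infty$-bound on $v$, I would exploit the sign of the reaction term. Setting $V:=\|v^{in}\|_\infty$, and using that~\eqref{d1b} holds pointwise almost everywhere, I would multiply the equation $\partial_t v-\Delta v+uv=0$ by $(v-V)_+$—an admissible factor, since it lies in $L^2((0,t),H^1(\Omega))$ by the regularity $v\in L^2((0,t),H_N^2(\Omega))$—and integrate over $\Omega$, using the Neumann boundary condition to integrate the Laplacian term by parts. This yields
\begin{equation*}
	\frac{1}{2}\frac{\mathrm{d}}{\mathrm{d}t}\|(v-V)_+\|_2^2 + \|\nabla(v-V)_+\|_2^2 + \int_\Omega uv(v-V)_+\ \mathrm{d}x = 0\,.
\end{equation*}
Since $u\ge 0$ and $v\ge 0$, the last two terms are non-negative, so that $t\mapsto\|(v-V)_+\|_2^2$ is non-increasing. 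As $(v^{in}-V)_+=0$, it follows that $\|(v-V)_+\|_2=0$, whence $v(t)\le V$ a.e.\ in $\Omega$ for every $t\ge 0$; combined with the non-negativity of $v$, this gives $\|v(t)\|_\infty\le V$.

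The steps are essentially routine, the two technical points being the admissibility of the test functions and the justification of the chain rule for the time derivative. The regularity $v\in W^{1,2}((0,t),L^2(\Omega))\cap L^2((0,t),H_N^2(\Omega))$ is precisely what is needed to legitimize the identity $\int_\Omega\partial_t v\,(v-V)_+\ \mathrm{d}x=\frac{1}{2}\frac{\mathrm{d}}{\mathrm{d}t}\|(v-V)_+\|_2^2$ through the standard chain rule in a Gelfand triple, so I do not anticipate any genuine obstacle; the only care required is to confirm that the constant function is an eligible test function in~\eqref{d1a} and that the pairing $\langle u(t),1\rangle_{(H^1)',H^1}$ is the correct interpretation of $\int_\Omega u(t)\,\mathrm{d}x$ when $u(t)$ is only known to lie in $H^1(\Omega)'$.
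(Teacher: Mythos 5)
Your proposal is correct and follows essentially the same route as the paper, whose proof is a one-line appeal to \eqref{d1a} with $\vartheta\equiv 1$, to \eqref{d1b}, to the non-negativity of $uv$, and to the comparison principle. The only difference is that you make the comparison principle explicit through the standard Stampacchia truncation $(v-V)_+$, which is indeed the natural way to implement it at the regularity level $v\in W^{1,2}((0,t),L^2(\Omega))\cap L^2((0,t),H_N^2(\Omega))$ guaranteed by Definition~\ref{def2}.
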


\begin{proof}
	Lemma~\ref{lem3} readily follows from~\eqref{d1a} (with $\vartheta\equiv 1$), along with~\eqref{d1b}, the non-negativity of $uv$, and the comparison principle.
\end{proof}

We next exploit the specific form of~\eqref{ks1} to derive a so-called duality estimate on~$u$.

\begin{lemma}\label{lem4}
	Set $P=\mathcal{K}[u-M]$. Then, for $t>0$, 
	\begin{equation*}
		P \in  W^{1,2}((0,t),L^2(\Omega)) \cap L^2((0,t),H_N^2(\Omega))
	\end{equation*}
	and
	\begin{equation}
		\frac{\mathrm{d}}{\mathrm{d}t} \|\nabla P\|_2^2 = - 2 \int_\Omega \gamma(v) (u-M)^2\ \mathrm{d}x - 2M \int_\Omega \gamma(v) (u-M)\ \mathrm{d}x\,. \label{d2}
	\end{equation}
\end{lemma}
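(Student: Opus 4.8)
The plan is to compute the time derivative of $\|\nabla P\|_2^2$ using the characterization of $P=\mathcal{K}[u-M]$ as the solution to the Neumann problem $-\Delta P = u-M$. The key identity I will exploit is that, for functions $z$ with $\langle z\rangle = 0$, one has $\|\nabla\mathcal{K}[z]\|_2^2 = \langle \mathcal{K}[z], z\rangle$, obtained by testing the equation $-\Delta\mathcal{K}[z]=z$ against $\mathcal{K}[z]$ and integrating by parts (the boundary term vanishes by the Neumann condition). Since $\langle u(t)-M\rangle = 0$ for all $t\ge 0$ by Lemma~\ref{lem3}, this applies to $z=u(t)-M$, giving $\|\nabla P(t)\|_2^2 = \langle P(t), u(t)-M\rangle$.

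**First I would** establish the claimed regularity of $P$. From the equation \eqref{ks1}, $\partial_t u = \Delta(u\gamma(v))$, and since $u\gamma(v)\in L^2((0,t)\times\Omega)$ (because $u\in L^2$ by Definition~\ref{def2} and $\gamma(v)$ is bounded by \eqref{n3} and Lemma~\ref{lem3}), one expects $\partial_t(u-M) = \Delta(u\gamma(v))$ in an appropriate weak sense. Applying $\mathcal{K}$, which inverts $-\Delta$ on the zero-mean subspace, gives $\partial_t P = \mathcal{K}[\Delta(u\gamma(v))]$. Because $u\gamma(v)$ has, in general, a nonzero mean, I would write $\mathcal{K}[\Delta w] = -(w - \langle w\rangle)$ for suitable $w$, so that $\partial_t P = -(u\gamma(v) - \langle u\gamma(v)\rangle)$. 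This yields $\partial_t P\in L^2((0,t)\times\Omega)$, hence $P\in W^{1,2}((0,t),L^2(\Omega))$; the spatial regularity $P\in L^2((0,t),H_N^2(\Omega))$ follows from elliptic regularity for the Neumann problem applied pointwise in time, using $-\Delta P = u-M\in L^2$.

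**The main computation** is then straightforward: differentiating $\|\nabla P\|_2^2 = \langle P, u-M\rangle$ and using symmetry of the pairing (which is where self-adjointness of $\mathcal{K}$ matters) gives $\frac{\mathrm d}{\mathrm dt}\|\nabla P\|_2^2 = 2\langle P, \partial_t(u-M)\rangle = 2\int_\Omega P\,\partial_t u\,\mathrm dx$. Substituting $\partial_t u = \Delta(u\gamma(v))$ and integrating by parts twice (using $-\Delta P = u-M$ and the Neumann conditions so that all boundary terms drop out) produces $-2\int_\Omega (u-M)\,u\gamma(v)\,\mathrm dx$. Writing $u = (u-M)+M$ splits this into $-2\int_\Omega\gamma(v)(u-M)^2\,\mathrm dx - 2M\int_\Omega\gamma(v)(u-M)\,\mathrm dx$, which is exactly \eqref{d2}.

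**The hard part will be** making the formal differentiation and integration-by-parts rigorous at the level of weak solutions, since $u$ is only known to lie in $L^2$ and the time derivative of $u$ exists only in the distributional sense encoded by the weak formulation \eqref{d1a}. Rather than differentiating pointwise in $t$, I would work directly from \eqref{d1a} by choosing the test function $\vartheta = P$ (justifying that $P$ has the required regularity as established above), and exploit the structure $-\Delta\vartheta = -\Delta P = u-M$ inside the space-time integral to recover \eqref{d2} after integrating by parts in time. The delicate points are verifying that $P$ is an admissible test function and handling the mean-value corrections (the terms involving $\langle u\gamma(v)\rangle$ and $M$), which ultimately contribute only through the zero-mean property $\langle u-M\rangle=0$ and therefore cancel or reorganize into the stated right-hand side.
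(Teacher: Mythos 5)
Your proposal is correct and follows essentially the same route as the paper: both derive the identity $\partial_t P = \langle u\gamma(v)\rangle - u\gamma(v)$ from \eqref{d1a} and \eqref{n2a}, obtain the regularity from $u\in L^2$ and the boundedness of $\gamma(v)$, and reduce the time derivative of $\|\nabla P\|_2^2$ to $-2\int_\Omega \gamma(v)u(u-M)\,\mathrm{d}x$ before splitting $u=(u-M)+M$. The only (immaterial) difference is that you differentiate $\langle P,u-M\rangle$ and invoke self-adjointness of $\mathcal{K}$, while the paper writes $\frac{\mathrm{d}}{\mathrm{d}t}\|\nabla P\|_2^2=-2\int_\Omega \partial_t P\,\Delta P\,\mathrm{d}x$ directly.
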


\begin{proof}
As
\begin{equation}
	\partial_t P = \langle u \gamma(v) \rangle - u \gamma(v) \;\;\text{ in }\;\; (0,\infty)\times\Omega \label{eqP}
\end{equation}
by~\eqref{n2a} and~\eqref{d1a} (with a suitable choice of test functions), the claimed regularity of~$P$ follows from~\eqref{n3}, \eqref{n2}, the square integrability of~$u$, and the boundedness of~$v$. It then follows from~\eqref{eqP} that
\begin{align*}
	\frac{1}{2}\frac{\mathrm{d}}{\mathrm{d}t} \|\nabla P\|_2^2 & = - \int_\Omega \partial_t P \Delta P\ \mathrm{d}x = \int_\Omega (u-M) \big[ \langle u \gamma(v) \rangle - u\gamma(v) \big]\ \mathrm{d}x \\
	& = - \int_\Omega \gamma(v) u(u-M)\ \mathrm{d}x \\
	& = - \int_\Omega \gamma(v) (u-M)^2\ \mathrm{d}x - M \int_\Omega \gamma(v) (u-M)\ \mathrm{d}x\,,
\end{align*}
which completes the proof.
\end{proof}

We next take advantage of the non-positivity of the right hand side of~\eqref{ks2} to obtain a classical energy estimate on $v$.

\begin{lemma}\label{lem5}
	For $t\ge 0$,
	\begin{equation*}
		\frac{\mathrm{d}}{\mathrm{d}t} \|v\|_2^2 + 2 \|\nabla v\|_2^2 + 2 \|v\sqrt{u}\|_2^2 = 0\,. 
	\end{equation*}
\end{lemma}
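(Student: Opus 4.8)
The plan is to test the evolution equation~\eqref{d1b} for~$v$ with~$v$ itself and integrate over~$\Omega$, the substance of the argument being the justification of the manipulations within the regularity class fixed in Definition~\ref{def2}. Fix $T>0$. By that definition, on the interval $(0,T)$ one has $v\in L^\infty((0,T)\times\Omega)\cap W^{1,2}((0,T),L^2(\Omega))\cap L^2((0,T),H_N^2(\Omega))$ while $u\in L^2((0,T)\times\Omega)$. Multiplying the pointwise identity $\partial_t v-\Delta v+uv=0$ from~\eqref{d1b} by~$v$ gives, for a.e.\ $(t,x)\in(0,T)\times\Omega$,
\[
	v\,\partial_t v-v\,\Delta v+uv^2=0\,.
\]
Each term is integrable in~$x$ for a.e.\ $t\in(0,T)$: indeed $v\,\partial_t v$ and $v\,\Delta v$ belong to $L^2(\Omega)\subset L^1(\Omega)$ because $v$ is bounded while $\partial_t v,\Delta v\in L^2(\Omega)$, whereas $uv^2\in L^1(\Omega)$ because $v^2$ is bounded and $u\in L^2(\Omega)\subset L^1(\Omega)$. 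Integration over~$\Omega$ is therefore licit.

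Next I would treat the Laplacian term by Green's formula. Since $v(t)\in H_N^2(\Omega)$ for a.e.\ $t\in(0,T)$, the homogeneous Neumann boundary condition makes the boundary integral vanish, so that
\[
	-\int_\Omega v\,\Delta v\ \mathrm{d}x=\int_\Omega|\nabla v|^2\ \mathrm{d}x=\|\nabla v\|_2^2\,.
\]
Recognising $\int_\Omega uv^2\ \mathrm{d}x=\|v\sqrt{u}\|_2^2$, which is meaningful since $u\ge0$, I arrive at
\[
	\int_\Omega v\,\partial_t v\ \mathrm{d}x+\|\nabla v\|_2^2+\|v\sqrt{u}\|_2^2=0 \qquad\text{for a.e.\ }t\in(0,T)\,.
\]

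Finally I would identify the first term as a time derivative. The membership $v\in W^{1,2}((0,T),L^2(\Omega))$ ensures that $t\mapsto\|v(t)\|_2^2$ is absolutely continuous with $\frac{\mathrm{d}}{\mathrm{d}t}\|v\|_2^2=2\int_\Omega v\,\partial_t v\ \mathrm{d}x$ for a.e.\ $t\in(0,T)$; substituting this into the previous identity and multiplying by~$2$ yields the claimed relation for a.e.\ $t\in(0,T)$, and hence for a.e.\ $t>0$ since $T>0$ is arbitrary. The only points demanding care are this differentiation of the squared $L^2$-norm and the vanishing of the boundary term, both of which rest squarely on the regularity fixed in Definition~\ref{def2}; no new estimate is required, so I expect no genuine obstacle here beyond the bookkeeping of integrabilities.
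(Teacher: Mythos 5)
Your proof is correct and is exactly the classical energy estimate the paper has in mind (the paper in fact omits the proof of Lemma~\ref{lem5} entirely, calling it ``a classical energy estimate''): multiply~\eqref{d1b} by $v$, integrate by parts using $v(t)\in H_N^2(\Omega)$, and invoke $v\in W^{1,2}((0,T),L^2(\Omega))$ to identify $2\int_\Omega v\,\partial_t v\,\mathrm{d}x$ with $\frac{\mathrm{d}}{\mathrm{d}t}\|v\|_2^2$. Your careful bookkeeping of the integrabilities supplied by Definition~\ref{def2} is precisely the justification the paper leaves implicit.
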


At this point, we deviate from the proof of~\cite[Theorem~1.2]{LiWi2023a} and construct a Liapunov functional associated with~\eqref{ks}, building upon the outcome of Lemma~\ref{lem4} and Lemma~\ref{lem5}. This is clearly the main building block of the proof. As a preliminary step, we observe that the continuity and positivity~\eqref{n3} of~$\gamma$ and the boundedness~\eqref{b1} of~$v$ imply that
\begin{equation}
	\gamma_* := \min_{s\in [0,V]}\{\gamma(s)\}>0\,. \label{lb}
\end{equation}
We also recall the Poincar\'e-Wirtinger inequality: there is $c_1>0$ such that
\begin{equation}
	\big\| z - \langle z \rangle \big\|_2 \le c_1 \|\nabla z\|_2\,, \qquad z\in H^1(\Omega)\,. \label{pwi}
\end{equation}

\begin{proposition}\label{prop6}
	There is $c_2>0$ such that, for $t\ge 0$,
	\begin{equation*}
		\frac{\mathrm{d}}{\mathrm{d}t} \Big( \|\nabla P\|_2^2 + c_2 \|v\|_2^2 \Big) + \gamma_* \|u-M\|_2^2 + c_2 \|\nabla v\|_2^2 \le 0\,. 
	\end{equation*}
\end{proposition}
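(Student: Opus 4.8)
The plan is to combine the identity of Lemma~\ref{lem4} for $\|\nabla P\|_2^2$ with that of Lemma~\ref{lem5} for $\|v\|_2^2$, using the first (dissipative) term on the right hand side of~\eqref{d2} as the source of the decay in $\|u-M\|_2^2$ and treating the second term as a perturbation. Since $0<\gamma_*\le \gamma(v)$ by~\eqref{lb} and~\eqref{b1}, the good term is immediately estimated by
\begin{equation*}
	- 2 \int_\Omega \gamma(v) (u-M)^2\ \mathrm{d}x \le - 2\gamma_* \|u-M\|_2^2\,.
\end{equation*}

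The main obstacle is the sign-indefinite cross term $-2M\int_\Omega \gamma(v)(u-M)\ \mathrm{d}x$, which must be absorbed by the dissipation. The key observation is that $\langle u-M\rangle = 0$ by Lemma~\ref{lem3}, so $\int_\Omega (u-M)\ \mathrm{d}x = 0$ and one may subtract the mean of $\gamma(v)$ for free, writing
\begin{equation*}
	- 2M \int_\Omega \gamma(v)(u-M)\ \mathrm{d}x = - 2M \int_\Omega \big( \gamma(v) - \langle \gamma(v) \rangle \big)(u-M)\ \mathrm{d}x\,.
\end{equation*}
Because $\gamma\in C^1([0,\infty))$, the derivative $\gamma'$ is bounded on the compact interval $[0,V]$, so setting $\gamma^* := \max_{s\in[0,V]} |\gamma'(s)|$ and using $\nabla\gamma(v) = \gamma'(v)\nabla v$ together with~\eqref{b1} gives $\|\nabla\gamma(v)\|_2 \le \gamma^* \|\nabla v\|_2$. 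The Poincar\'e-Wirtinger inequality~\eqref{pwi} applied to $\gamma(v)\in H^1(\Omega)$ then yields $\|\gamma(v) - \langle\gamma(v)\rangle\|_2 \le c_1\gamma^* \|\nabla v\|_2$, and the Cauchy-Schwarz inequality provides the bound
\begin{equation*}
	\left| 2M \int_\Omega \big( \gamma(v) - \langle \gamma(v) \rangle \big)(u-M)\ \mathrm{d}x \right| \le 2M c_1 \gamma^* \|\nabla v\|_2 \|u-M\|_2\,.
\end{equation*}

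It remains to split this cross term by Young's inequality so that half the dissipation is preserved:
\begin{equation*}
	2M c_1 \gamma^* \|\nabla v\|_2 \|u-M\|_2 \le \gamma_* \|u-M\|_2^2 + \frac{(M c_1 \gamma^*)^2}{\gamma_*} \|\nabla v\|_2^2\,,
\end{equation*}
which transforms~\eqref{d2} into
\begin{equation*}
	\frac{\mathrm{d}}{\mathrm{d}t} \|\nabla P\|_2^2 \le - \gamma_* \|u-M\|_2^2 + \frac{(M c_1 \gamma^*)^2}{\gamma_*} \|\nabla v\|_2^2\,.
\end{equation*}
Finally, I would discard the nonnegative term $2\|v\sqrt{u}\|_2^2$ in the identity of Lemma~\ref{lem5}, multiply the resulting inequality $\frac{\mathrm{d}}{\mathrm{d}t}\|v\|_2^2 \le -2\|\nabla v\|_2^2$ by a constant $c_2>0$, and add it to the last display. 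The coefficient of $\|\nabla v\|_2^2$ on the right hand side then becomes $(Mc_1\gamma^*)^2/\gamma_* - 2c_2$, so choosing $c_2 := (M c_1 \gamma^*)^2/\gamma_*$ makes it equal to $-c_2$, which, after moving it to the left, gives exactly the stated inequality.
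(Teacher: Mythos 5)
Your proof is correct and follows essentially the same route as the paper: the same combination of Lemma~\ref{lem4}, Lemma~\ref{lem5}, the Poincar\'e--Wirtinger inequality and Young's inequality, arriving at the identical choice $c_2=(Mc_1\|\gamma'\|_{L^\infty(0,V)})^2/\gamma_*$. The only (immaterial) difference is that you center the cross term by subtracting $\langle\gamma(v)\rangle$ and apply~\eqref{pwi} to $\gamma(v)$ via the chain rule, whereas the paper subtracts the constant $\gamma(\langle v\rangle)$ and applies the mean value theorem together with~\eqref{pwi} for $v$; both yield the same bound $2Mc_1\|\gamma'\|_{L^\infty(0,V)}\|\nabla v\|_2\|u-M\|_2$.
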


\begin{proof}
	It follows from~\eqref{n3}, \eqref{b1}, \eqref{pwi}, and H\"older's inequality that
	\begin{align}
		2M \int_\Omega \gamma(v) (u-M)\ \mathrm{d}x & = 2M \int_\Omega \big[ \gamma(v) - \gamma(\langle v \rangle) \big] (u-M)\ \mathrm{d}x \nonumber\\
		& \le 2M \|\gamma'\|_{L^\infty(0,V)} \int_\Omega |v-\langle v\rangle| |u-M|\ \mathrm{d}x \nonumber\\
		& \le 2M \|\gamma'\|_{L^\infty(0,V)} \|v-\langle v\rangle\|_2 \|u-M\|_2 \nonumber\\
		& \le 2Mc_1 \|\gamma'\|_{L^\infty(0,V)} \|\nabla v\|_2 \|u-M\|_2\,. \label{e1}
	\end{align}
Setting $c_2 := \big( Mc_1 \|\gamma'\|_{L^\infty(0,V)} \big)^2/\gamma_*$, we infer from Lemma~\ref{lem4}, Lemma~\ref{lem5}, \eqref{lb}, \eqref{e1}, and Young's inequality that
\begin{align*}
	\frac{\mathrm{d}}{\mathrm{d}t} \Big( \|\nabla P\|_2^2 + c_2 \|v\|_2^2 \Big) & \le - 2 \gamma_* \| u-M\|_2^2 - 2 c_2 \|\nabla v\|_2^2 \\
	& \qquad + 2Mc_1 \|\gamma'\|_{L^\infty(0,V)} \|\nabla v\|_2 \|u-M\|_2 \\
	& \le - 2 \gamma_* \| u-M\|_2^2 - 2 c_2 \|\nabla v\|_2^2 \\
	& \qquad + \gamma_* \|u-M\|_2^2 + \frac{M^2 c_1^2 \|\gamma'\|_{L^\infty(0,V)}^2}{\gamma_*} \|\nabla v\|_2^2 \\
	& = - \gamma_* \| u-M\|_2^2 - c_2 \|\nabla v\|_2^2\,,
\end{align*}
as claimed.
\end{proof}

We next argue as in~\cite[Lemma~3.2]{LiWi2023a} to obtain additional information on~$v$.

\begin{lemma}\label{lem7}
	For $t\ge 0$, 
	\begin{equation*}
		\frac{\mathrm{d}}{\mathrm{d}t} \|\nabla v\|_2^2 + 2M \|\nabla v\|_2^2 + \|\Delta v\|_2^2 \le V^2 \|u-M\|_2^2\,.
	\end{equation*}
\end{lemma}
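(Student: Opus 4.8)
The plan is to test the evolution equation for $v$ in~\eqref{d1b} with $-\Delta v$ and integrate over $\Omega$. The regularity $v\in W^{1,2}((0,t),L^2(\Omega))\cap L^2((0,t),H_N^2(\Omega))$ guaranteed by Definition~\ref{def2} is precisely what is needed to make every term below meaningful and to justify the integrations by parts. As a first step I would record the identity
\[
	\int_\Omega \partial_t v\,(-\Delta v)\ \mathrm{d}x = \int_\Omega \nabla(\partial_t v)\cdot\nabla v\ \mathrm{d}x = \frac{1}{2}\frac{\mathrm{d}}{\mathrm{d}t}\|\nabla v\|_2^2\,,
\]
where the first equality relies on the homogeneous Neumann condition $\nabla v\cdot\mathbf{n}=0$ on $\partial\Omega$ (which kills the boundary term) and the second is the standard consequence of the above time regularity, handled exactly as in~\cite[Lemma~3.2]{LiWi2023a}.

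Multiplying $\partial_t v=\Delta v-uv$ by $-\Delta v$ and integrating then yields
\[
	\frac{1}{2}\frac{\mathrm{d}}{\mathrm{d}t}\|\nabla v\|_2^2 = -\|\Delta v\|_2^2 + \int_\Omega uv\,\Delta v\ \mathrm{d}x\,.
\]
To extract the damping term $2M\|\nabla v\|_2^2$ I would split $u=M+(u-M)$. The contribution of the constant part integrates by parts (again using the Neumann condition) into $M\int_\Omega v\Delta v\ \mathrm{d}x=-M\|\nabla v\|_2^2$, so that
\[
	\frac{1}{2}\frac{\mathrm{d}}{\mathrm{d}t}\|\nabla v\|_2^2 + M\|\nabla v\|_2^2 + \|\Delta v\|_2^2 = \int_\Omega (u-M)v\,\Delta v\ \mathrm{d}x\,.
\]

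For the remaining term I would invoke the $L^\infty$-bound $\|v(t)\|_\infty\le V$ from Lemma~\ref{lem3}, followed by Cauchy--Schwarz and Young's inequality:
\[
	\int_\Omega (u-M)v\,\Delta v\ \mathrm{d}x \le V\|u-M\|_2\,\|\Delta v\|_2 \le \frac{V^2}{2}\|u-M\|_2^2 + \frac{1}{2}\|\Delta v\|_2^2\,.
\]
Absorbing $\tfrac{1}{2}\|\Delta v\|_2^2$ into the left-hand side and multiplying by $2$ then gives the asserted inequality. I do not expect a genuine obstacle here: the only delicate point is the rigorous justification of the two integrations by parts and of the time-derivative identity within the weak framework, but all of these are covered by the regularity stated in Definition~\ref{def2} and are treated in the same manner as in~\cite[Lemma~3.2]{LiWi2023a}.
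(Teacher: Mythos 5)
Your proof is correct and follows exactly the same route as the paper: multiply \eqref{d1b} by $-\Delta v$, split $u=M+(u-M)$ to produce the damping term $-M\|\nabla v\|_2^2$, bound the remainder by $V\|u-M\|_2\|\Delta v\|_2$ using \eqref{b1}, and absorb via Young's inequality. No gaps.
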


\begin{proof}
	We infer from~\eqref{d1b}, \eqref{b1}, and H\"older's and Young's inequalities that
	\begin{align*}
		\frac{1}{2} \frac{\mathrm{d}}{\mathrm{d}t} \|\nabla v\|_2^2 + \|\Delta v\|_2^2 & = \int_\Omega u v\Delta v\ \mathrm{d}x \\
		& = \int_\Omega (u-M) v\Delta v\ \mathrm{d}x + M \int_\Omega v\Delta v\ \mathrm{d}x \\
		& \le V \|u-M\|_2 \|\Delta v\|_2 - M \|\nabla v\|_2^2 \\
		& \le \frac{\|\Delta v\|_2^2}{2} + \frac{V^2 \|u-M\|_2^2}{2} - M \|\nabla v\|_2^2\,,
	\end{align*}
	from which Lemma~\ref{lem7} follows.
\end{proof}

Summarizing the outcome of Proposition~\ref{prop6} and Lemma~\ref{lem7}, we have so far obtained the following estimates on $u$ and $v$.

\begin{proposition}\label{prop8}
	There is $c_3>0$ such that
	\begin{align}
		& \|P(t) \|_{H^1} + \|v(t)\|_{H^1} \le c_3\,, \qquad t\ge 0\,, \label{b2} \\
		& \int_0^\infty \Big[ \|u(s)-M\|_2^2 + \|\nabla v(s)\|_2^2 + \|\Delta v(s)\|_2^2 \Big]\ \mathrm{d}s \le c_3\,, \label{b3}
	\end{align}
and
\begin{equation}
	\lim_{t\to\infty} \|\nabla v(t)\|_2 = 0\,. \label{b4}
\end{equation}
\end{proposition}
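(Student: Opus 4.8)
The plan is to prove Proposition~\ref{prop8} by combining the differential inequalities from Proposition~\ref{prop6} and Lemma~\ref{lem7} and extracting three consequences: a uniform-in-time bound, an integrability-in-time bound, and a genuine limit. The crucial structural fact is that Proposition~\ref{prop6} exhibits $E(t) := \|\nabla P(t)\|_2^2 + c_2 \|v(t)\|_2^2$ as a Liapunov functional, since its time derivative is bounded above by $-\gamma_* \|u-M\|_2^2 - c_2\|\nabla v\|_2^2 \le 0$.

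First I would integrate the inequality of Proposition~\ref{prop6} in time from $0$ to $t$. Because $E$ is nonincreasing, $E(t) \le E(0)$ for all $t\ge 0$; this immediately yields a uniform bound on $\|\nabla P(t)\|_2$ and on $\|v(t)\|_2$. Integrating the two dissipation terms gives
\begin{equation*}
	\int_0^t \Big[ \gamma_* \|u(s)-M\|_2^2 + c_2\|\nabla v(s)\|_2^2 \Big]\ \mathrm{d}s \le E(0)\,,
\end{equation*}
so that $s\mapsto \|u(s)-M\|_2^2$ and $s\mapsto \|\nabla v(s)\|_2^2$ are both integrable on $(0,\infty)$, accounting for two of the three integrals in~\eqref{b3}. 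The remaining integral, involving $\|\Delta v\|_2^2$, comes from Lemma~\ref{lem7}: integrating that inequality and discarding the nonnegative term $2M\|\nabla v\|_2^2$ bounds $\int_0^t \|\Delta v\|_2^2\ \mathrm{d}s$ by $\|\nabla v^{in}\|_2^2 + V^2\int_0^t \|u-M\|_2^2\ \mathrm{d}s$, which is finite by the bound just obtained. Combining these, and noting that the uniform $L^2$-bounds on $v$ and $\nabla v$ together with $P(t)\in H^1(\Omega)$ and $\langle P(t)\rangle = 0$ (so the Poincar\'e--Wirtinger inequality~\eqref{pwi} controls $\|P(t)\|_2$ by $\|\nabla P(t)\|_2$) give the full $H^1$-bounds in~\eqref{b2}, establishes~\eqref{b2} and~\eqref{b3}.

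For the limit~\eqref{b4}, the plan is a standard Barb\u{a}lat-type argument: I want to show that the nonnegative function $\varphi(t) := \|\nabla v(t)\|_2^2$ is integrable on $(0,\infty)$ and that its derivative is bounded above, from which $\varphi(t)\to 0$ follows. Integrability is already in hand from~\eqref{b3}. For the one-sided bound on $\varphi'$, I would read off from Lemma~\ref{lem7} that
\begin{equation*}
	\frac{\mathrm{d}}{\mathrm{d}t} \|\nabla v\|_2^2 \le V^2 \|u-M\|_2^2\,,
\end{equation*}
after discarding the two nonnegative terms $2M\|\nabla v\|_2^2$ and $\|\Delta v\|_2^2$ on the left. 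Since $t\mapsto \|u(t)-M\|_2^2$ is integrable, the function $t\mapsto \|\nabla v(t)\|_2^2 - V^2\int_0^t\|u(s)-M\|_2^2\ \mathrm{d}s$ is nonincreasing and bounded below by $-V^2 c_3$, hence converges; because the integral term converges as well, $\|\nabla v(t)\|_2^2$ itself converges, and its integrability forces the limit to be $0$, giving~\eqref{b4}.

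I expect the main obstacle to be bookkeeping rather than substance: one must be careful that the various differential inequalities hold in the appropriate absolutely-continuous-in-time sense justified by the regularity in Definition~\ref{def2} and Lemma~\ref{lem4}, so that integration in time is legitimate. A secondary subtlety is the justification that $\|\nabla v(t)\|_2^2$ converges rather than merely having a liminf of zero; the one-sided derivative bound above is exactly what upgrades integrability to a genuine limit, and isolating that bound cleanly from Lemma~\ref{lem7} is the key step.
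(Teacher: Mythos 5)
Your proposal is correct, and for the bounds \eqref{b2}--\eqref{b3} it coincides with the paper's argument: integrate Proposition~\ref{prop6} to get the monotonicity of $\|\nabla P\|_2^2+c_2\|v\|_2^2$ and the integrability of $\|u-M\|_2^2$ and $\|\nabla v\|_2^2$, then integrate Lemma~\ref{lem7} to pick up the uniform bound on $\|\nabla v(t)\|_2$ and the integrability of $\|\Delta v\|_2^2$, with \eqref{n2b} and \eqref{pwi} supplying the missing $L^2$-norms of $P$. Where you genuinely diverge is the limit \eqref{b4}. The paper keeps the damping term $2M\|\nabla v\|_2^2$ in Lemma~\ref{lem7} and writes the variation-of-constants bound $\|\nabla v(t)\|_2^2 \le \|\nabla v^{in}\|_2^2 e^{-2Mt} + V^2\int_0^t e^{2M(s-t)}\|u(s)-M\|_2^2\,\mathrm{d}s$, concluding by the dominated-convergence observation \eqref{g2}; you instead discard that damping term and run a Barb\u{a}lat-type argument, showing $t\mapsto \|\nabla v(t)\|_2^2 - V^2\int_0^t\|u(s)-M\|_2^2\,\mathrm{d}s$ is nonincreasing and bounded below, hence convergent, so that $\|\nabla v(t)\|_2^2$ converges and its integrability forces the limit to be zero. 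Both routes are valid and rest on the same two inputs (Lemma~\ref{lem7} and the integrability of $\|u-M\|_2^2$); the paper's version additionally records an explicit exponential decay of the homogeneous part and reuses the same template \eqref{g2} verbatim in Lemma~\ref{lem9}, whereas yours is marginally more elementary in that it never needs the coefficient $2M>0$ (i.e.\ it would survive even if $M$ were not known to be positive at this stage). Your closing caveat about absolute continuity in time is adequately covered by the regularity $v\in W^{1,2}((0,t),L^2(\Omega))\cap L^2((0,t),H_N^2(\Omega))$ from Definition~\ref{def2}, so there is no gap.
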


\begin{proof}
	The bounds~\eqref{b2} and~\eqref{b3} being immediate consequences of~\eqref{n2b}, \eqref{pwi}, Proposition~\ref{prop6} and Lemma~\ref{lem7}, we are left with proving~\eqref{b4}. To this end, we deduce from Lemma~\ref{lem7} that, for $t\ge 0$, 
	\begin{equation}
		\|\nabla v(t)\|_2^2 \le \|\nabla v^{in}\|_2^2 e^{-2Mt} + V^2 \int_0^t e^{2M(s-t)} \|u(s)-M\|_2^2\ \mathrm{d}s\,. \label{g1}
	\end{equation}
At this point, we recall that, if $F$ belongs to $L^1(0,\infty)$, then a straightforward consequence of the Lebesgue dominated convergence theorem is that
\begin{equation}
	\lim_{t\to\infty} \int_0^t e^{\alpha(s-t)} |F(s)|\ \mathrm{d}s = 0 \label{g2}
\end{equation}
for any $\alpha>0$. Thanks to~\eqref{b3}, $s\mapsto \|u(s)-M\|_2^2$ belongs to $L^1(0,\infty)$ and we use~\eqref{g2} (with $\alpha=2M$ and $F=\|u-M\|_2^2$) to take the limit $t\to\infty$ in~\eqref{g1} and obtain~\eqref{b4}, thereby completing the proof.
\end{proof}

The final step of the proof of Theorem~\ref{thm1} deals with the convergence of $\| v(t)\|_1$ and $\|\nabla P(t)\|_2$ as $t\to\infty$.

\begin{lemma}\label{lem9}
	\begin{equation*}
		\lim_{t\to\infty} \| v(t)\|_1 = \lim_{t\to\infty} \|\nabla P(t)\|_2 = 0\,.
	\end{equation*}
\end{lemma}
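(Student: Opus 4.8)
The plan is to prove the two limits separately, exploiting the two limits already established in Proposition~\ref{prop8}, namely the integrability~\eqref{b3} and the decay~\eqref{b4} of $\|\nabla v\|_2$. First I would handle $\|v(t)\|_1$. Since $\Omega$ is bounded, H\"older's inequality gives $\|v(t)\|_1 \le |\Omega|^{1/2} \|v(t)\|_2$, so it suffices to prove $\|v(t)\|_2 \to 0$. For this I would revisit Lemma~\ref{lem5}, which reads $\frac{\mathrm{d}}{\mathrm{d}t}\|v\|_2^2 + 2\|\nabla v\|_2^2 + 2\|v\sqrt{u}\|_2^2 = 0$. The last term is non-negative, so in particular $\frac{\mathrm{d}}{\mathrm{d}t}\|v\|_2^2 \le 0$, and $t\mapsto \|v(t)\|_2^2$ is non-increasing and bounded below by $0$, hence converges to some limit $\ell \ge 0$. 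To identify $\ell = 0$, I would integrate Lemma~\ref{lem5} and use the $L^1(0,\infty)$ bound on $\|\nabla v\|_2^2$ from~\eqref{b3}; this already forces $\int_0^\infty \|\nabla v(s)\|_2^2\,\mathrm{d}s < \infty$. Combined with the Poincar\'e-Wirtinger inequality~\eqref{pwi}, $\|v - \langle v\rangle\|_2 \le c_1 \|\nabla v\|_2 \to 0$ by~\eqref{b4}, so the only obstruction to $\|v\|_2\to 0$ is the behavior of the mean $\langle v(t)\rangle$.

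The heart of the argument is therefore to show $\langle v(t)\rangle \to 0$. Here I would again use Lemma~\ref{lem5}: its integral form shows $\int_0^\infty \|v\sqrt{u}\|_2^2\,\mathrm{d}s = \int_0^\infty \int_\Omega u v^2\,\mathrm{d}x\,\mathrm{d}s \le \frac{1}{2}\|v^{in}\|_2^2 < \infty$. Since $\langle v\rangle$ converges (being the mean of $v$, whose $L^2$-norm converges and whose oscillation $v-\langle v\rangle$ tends to $0$ in $L^2$) and $v^2 \to \langle v\rangle^2$ in an averaged sense, one expects $\int_\Omega u v^2\,\mathrm{d}x$ to behave like $\langle v\rangle^2 \int_\Omega u\,\mathrm{d}x = M|\Omega|\langle v\rangle^2$ for large times; the time-integrability of the left side together with the positivity of $M$ would then force $\langle v\rangle \to 0$. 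Making this rigorous requires splitting $v = \langle v\rangle + (v - \langle v\rangle)$ inside $\int_\Omega u v^2\,\mathrm{d}x$ and controlling the cross and oscillation terms using $\|v-\langle v\rangle\|_2 \to 0$ and the conservation $\langle u\rangle = M$; the main obstacle here is that $u$ is only in $L^2$ (not uniformly bounded), so I must be careful that products like $\int_\Omega u\, |v-\langle v\rangle|^2\,\mathrm{d}x$ are controlled, which may need the $L^2$-in-time bound on $u-M$ from~\eqref{b3} rather than a pointwise-in-time estimate. This integrability-versus-pointwise tension is the delicate point of the whole lemma.

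Finally, for $\|\nabla P(t)\|_2 \to 0$, I would return to the Liapunov inequality of Proposition~\ref{prop6}, which gives $\frac{\mathrm{d}}{\mathrm{d}t}(\|\nabla P\|_2^2 + c_2\|v\|_2^2) \le -\gamma_*\|u-M\|_2^2 - c_2\|\nabla v\|_2^2 \le 0$. Thus $t\mapsto \|\nabla P(t)\|_2^2 + c_2\|v(t)\|_2^2$ is non-increasing and bounded below, hence converges to some limit $L \ge 0$. Having already shown $\|v(t)\|_2 \to 0$ in the previous steps, this yields that $\|\nabla P(t)\|_2^2$ itself converges, to $L$. To pin down $L = 0$, I would integrate the Liapunov inequality over $(0,\infty)$ to conclude $\int_0^\infty \|u(s)-M\|_2^2\,\mathrm{d}s < \infty$ (already in~\eqref{b3}); then, since $\|\nabla P\|_2^2$ is comparable to a norm of $u - M$ via the elliptic problem~\eqref{n2} defining $\mathcal{K}$ — specifically $\|\nabla P\|_2^2 = \int_\Omega (u-M) P\,\mathrm{d}x \le \|u-M\|_2\|P\|_2 \le c\,\|u-M\|_2\|\nabla P\|_2$ by~\eqref{n2b} and~\eqref{pwi}, giving $\|\nabla P\|_2 \le c\|u-M\|_2$ — the time-integrability of $\|u-M\|_2^2$ forces $\int_0^\infty \|\nabla P(s)\|_2^2\,\mathrm{d}s < \infty$. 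A convergent function whose square is integrable over $(0,\infty)$ must have limit zero, so $L = 0$ and $\|\nabla P(t)\|_2 \to 0$. The elliptic comparability estimate $\|\nabla P\|_2 \le c\|u-M\|_2$ is clean, so I expect this last step to be routine once $\|v\|_2\to 0$ is in hand; the genuine difficulty remains the mean-zero argument for $\langle v\rangle$ in the second paragraph.
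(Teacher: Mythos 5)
Your treatment of the second limit is correct and in fact takes a slightly different (and clean) route from the paper: you use the monotonicity of the Liapunov functional $\|\nabla P\|_2^2+c_2\|v\|_2^2$ together with the elliptic bound $\|\nabla P\|_2\le c_1\|u-M\|_2$ and the square-integrability in time of $\|u-M\|_2$, whereas the paper derives the differential inequality $\frac{\mathrm{d}}{\mathrm{d}t}\|\nabla P\|_2^2+c_4\|\nabla P\|_2^2\le c_5\|\nabla v\|_2\|u-M\|_2$ and concludes via the decay of the resulting convolution integral. Both work; note only that your version needs $\|v(t)\|_2^2$ to converge (which monotonicity from Lemma~\ref{lem5} already gives), not necessarily to zero, so this half could even be decoupled from the first half.

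The genuine gap is in the first half, precisely at the step you yourself flag as delicate: showing $\langle v(t)\rangle\to 0$. Your plan is to extract time-integrability of $\langle v\rangle^2$ from $\int_0^\infty\int_\Omega uv^2\,\mathrm{d}x\,\mathrm{d}s<\infty$ after expanding $v=\langle v\rangle+w$ with $w=v-\langle v\rangle$. The oscillation term is actually harmless because $\|v\|_\infty\le V$, so $\int_\Omega uw^2\le 2V\|u\|_2\|w\|_2$; but both it and the cross term are then bounded by $\big(\|u-M\|_2+M|\Omega|^{1/2}\big)\,c_1\|\nabla v\|_2$, and the piece $M|\Omega|^{1/2}\|\nabla v\|_2$ is only in $L^2(0,\infty)$ by~\eqref{b3}, not in $L^1(0,\infty)$. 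Hence the inequality $M|\Omega|\langle v\rangle^2\le \int_\Omega uv^2+\text{(errors)}$ does \emph{not} yield $\langle v\rangle^2\in L^1(0,\infty)$, and the conclusion does not follow as stated. The argument can be repaired — e.g.\ pick $t_k\to\infty$ along which $\int_\Omega u(t_k)v(t_k)^2\,\mathrm{d}x$, $\|u(t_k)-M\|_2$ and $\|\nabla v(t_k)\|_2$ all tend to zero (possible since their sum, suitably squared, is in $L^1(0,\infty)$), deduce $\langle v(t_k)\rangle\to0$, and invoke the monotonicity of $t\mapsto\|v(t)\|_1$ to upgrade this to the full limit — but you did not supply this. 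The paper sidesteps the difficulty entirely by working with the first moment rather than the second: from $\frac{\mathrm{d}}{\mathrm{d}t}\|v\|_1=-\int_\Omega uv\,\mathrm{d}x=-\int_\Omega(u-M)(v-\langle v\rangle)\,\mathrm{d}x-M\|v\|_1$ one gets $\frac{\mathrm{d}}{\mathrm{d}t}\|v\|_1+M\|v\|_1\le c_1\|u-M\|_2\|\nabla v\|_2$, whose right-hand side \emph{is} in $L^1(0,\infty)$ as a product of two $L^2(0,\infty)$ functions, and Duhamel plus~\eqref{g2} finishes. You should adopt that linear-in-$v$ identity (or complete the subsequence argument) to close the gap.
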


\begin{proof}
	Let us begin with the convergence of $\| v\|_1$ and infer from~\eqref{d1b}, \eqref{b1}, and the non-negativity of $v$ that, for $t\ge 0$, 
	\begin{align*}
		\frac{\mathrm{d}}{\mathrm{d}t} \|v\|_1 & = - \int_\Omega u v\ \mathrm{d}x = - \int_\Omega (u-M) v\ \mathrm{d}x - M \|v\|_1 \\
		& = - \int_\Omega (u-M) (v-\langle v\rangle)\ \mathrm{d}x - M \|v\|_1\,.
	\end{align*} 
Hence, by~\eqref{pwi} and H\"older's inequality,
\begin{equation*}
	\frac{\mathrm{d}}{\mathrm{d}t} \|v\|_1 + M \|v\|_1 \le \|u-M\|_2 \|v-\langle v\rangle\|_2 \le c_1 \|u-M\|_2 \|\nabla v\|_2\,.
\end{equation*}
We then integrate with respect to time to find
\begin{equation}
	\|v(t)\|_1 \le \|v^{in}\|_1 e^{-Mt} + c_1 \int_0^t e^{M(s-t)} \|u(s)-M\|_2 \|\nabla v(s)\|_2\ \mathrm{d}s\,. \label{g3}
\end{equation}
Since $s\mapsto \|u(s)-M\|_2 \|\nabla v(s)\|_2$ belongs to $L^1(0,\infty)$ by~\eqref{b3}, we deduce from~\eqref{g2} (with $\alpha=M$ and $F=\|u-M\|_2 \|\nabla v\|_2$) that the right hand side of~\eqref{g3} converges to zero as $t\to\infty$. Consequently, 
\begin{equation*}
	\lim_{t\to\infty} \|v(t)\|_1 = 0\,.
\end{equation*}

Similarly, we infer from~\eqref{d2}, \eqref{lb}, and~\eqref{e1} that, for $t\ge 0$, 
\begin{equation*}
	\frac{\mathrm{d}}{\mathrm{d}t} \|\nabla P\|_2^2 \le -2 \gamma_* \|u-M\|_2^2 + 2M c_1 \|\gamma'\|_{L^\infty(0,V)} \|\nabla v\|_2\|u-M\|_2\,.
\end{equation*}
Moreover, by~\eqref{n2}, \eqref{pwi}, and H\"older's inequality,
\begin{align*}
	\|\nabla P\|_2^2 & =  - \int_\Omega P \Delta P\ \mathrm{d}x = \int_\Omega (u-M) P\ \mathrm{d}x \\
	& \le \|u-M\|_2 \|P\|_2 \le c_1 \|u-M\|_2 \|\nabla P\|_2\,,
\end{align*}
so that
\begin{equation*}
	\|\nabla P\|_2 \le c_1 \|u-M\|_2\,.
\end{equation*}
Gathering the above inequalities and setting $c_4 := 2\gamma_*/c_1^2$ and $c_5 := 2M c_1\|\gamma'\|_{L^\infty(0,V)}$, we end up with
\begin{equation*}
	\frac{\mathrm{d}}{\mathrm{d}t} \|\nabla P\|_2^2 + c_4 \|\nabla P\|_2^2 \le c_5 \|\nabla v\|_2 \|u-M\|_2\,.
\end{equation*} 
Hence, after integration with respect to time,
\begin{equation*}
	\|\nabla P(t)\|_2^2 \le \|\nabla P(0)\|_2^2 e^{-c_4 t} + c_5 \int_0^t e^{c_4(s-t)} \|\nabla v(s)\|_2 \|u(s)-M\|_2\ \mathrm{d}s\,,
\end{equation*}
and we argue as above to conclude that $\|\nabla P(t)\|_2$ converges to zero as $t\to\infty$. 	
\end{proof}

Theorem~\ref{thm1} is now an immediate consequence of Proposition~\ref{prop8} and Lemma~\ref{lem9}.

\begin{proof}[Proof of Theorem~\ref{thm1}]
	The convergences~\eqref{cvp} follow from~\eqref{pwi}, \eqref{b4}, and Lemma~\ref{lem9}, while the time integrability~\eqref{b3} of $\|u-M\|_2$ and $\|\Delta v\|_2$, along with~\eqref{cvp} and elliptic regularity, gives~\eqref{cvi}.
\end{proof}

We finally provide the proof of Corollary~\ref{cor1}.

\begin{proof}[Proof of Corollary~\ref{cor1}]
Since $n\in\{1,2,3\}$, the space $H^2(\Omega)$ is continuously embedded in $L^\infty(\Omega)$ and we deduce from~\eqref{cvi} that
\begin{equation*}
	\lim_{t\to\infty} \int_t^{t+1} \|v(s)\|_\infty^2\ \mathrm{d}s = 0\,.
\end{equation*}
We then infer from~\eqref{ks2}, \eqref{ks3}, the non--negativity of~$u$ and~$v$, and the comparison principle that 
\begin{equation*}
	\|v(t+1)\|_\infty \le \|v(s)\|_\infty\,, \qquad s\in [t,t+1]\,.
\end{equation*}
Combining the previous two properties readily gives
\begin{equation*}
	\lim_{t\to\infty} \|v(t)\|_\infty^2 = \lim_{t\to\infty} \|v(t+1)\|_\infty^2 \le \lim_{t\to\infty} \int_t^{t+1} \|v(s)\|_\infty^2\ \mathrm{d}s = 0\,,
\end{equation*}
which completes the proof.
\end{proof}


\bibliographystyle{siam}
\bibliography{LTSHCMLSC}

\begin{thebibliography}{10}

\bibitem{BBTW2015}
{\sc N.~Bellomo, A.~Bellouquid, Y.~Tao, and M.~Winkler}, {\em Toward a
  mathematical theory of {Keller}-{Segel} models of pattern formation in
  biological tissues}, Math. Models Methods Appl. Sci., 25 (2015),
  pp.~1663--1763.

\bibitem{HiPa2009}
{\sc T.~Hillen and K.~J. Painter}, {\em A user's guide to {PDE} models for
  chemotaxis}, J. Math. Biol., 58 (2009), pp.~183--217.

\bibitem{Hors2003}
{\sc D.~Horstmann}, {\em From 1970 until present: {The} {Keller}-{Segel} model
  in chemotaxis and its consequences. {I}}, Jahresber. Dtsch. Math.-Ver., 105
  (2003), pp.~103--165.

\bibitem{KeSe1970}
{\sc E.~F. Keller and L.~A. Segel}, {\em Initiation of slime mold aggregation
  viewed as an instability}, J. Theoret. Biol., 26 (1970), pp.~399--415.

\bibitem{LiZh2021}
{\sc D.~Li and J.~Zhao}, {\em Global boundedness and large time behavior of
  solutions to a chemotaxis-consumption system with signal-dependent motility},
  Z. Angew. Math. Phys., 72 (2021), p.~21.
\newblock Id/No 57.

\bibitem{LiWi2023b}
{\sc G.~Li and M.~Winkler}, {\em Refined regularity analysis for a
  {K}eller-{S}egel-consumption system involving signal-dependent motilities}.
\newblock arXiv: 2206.13327, to appear in Applicable Anal., 2022.

\bibitem{LiWi2023a}
\leavevmode\vrule height 2pt depth -1.6pt width 23pt, {\em Relaxation in a
  {K}eller-{S}egel-consumption system involving signal-dependent motilities},
  Commun. Math. Sci., 21 (2023), pp.~299--322.

\bibitem{Pain2019}
{\sc K.~J. Painter}, {\em Mathematical models for chemotaxis and their
  applications in self-organisation phenomena}, J. Theor. Biol., 481 (2019),
  pp.~162--182.

\bibitem{Wink2022a}
{\sc M.~Winkler}, {\em Application of the {M}oser-{T}rudinger inequality in the
  construction of global solutions to a strongly degenerate migration model},
  Bull. Math. Sci.,  (2022), p.~16.
\newblock Id/No 2250012.

\bibitem{Wink2023b}
\leavevmode\vrule height 2pt depth -1.6pt width 23pt, {\em A quantitative
  strong parabolic maximum principle and application to a taxis-type
  migration-consumption model involving signal-dependent degenerate diffusion}.
\newblock arXiv: 2209.12724, to appear in Ann. Inst. H. Poincar\'e Anal. Non
  Lin\'eaire, 2022.

\bibitem{Wink2023a}
\leavevmode\vrule height 2pt depth -1.6pt width 23pt, {\em Global generalized
  solvability in a strongly degenerate taxis-type parabolic system modeling
  migration-consumption interaction}, Z. Angew. Math. Phys., 74 (2023), p.~20.
\newblock Id/No 32.

\end{thebibliography}

\end{document}